\pdfoutput=1
\RequirePackage{ifpdf}
\ifpdf 
\documentclass[pdftex]{sigma}
\else
\documentclass{sigma}
\fi

\numberwithin{equation}{section}

\newtheorem{Theorem}{Theorem}[section]
\newtheorem{Corollary}[Theorem]{Corollary}
\newtheorem{Lemma}[Theorem]{Lemma}
\newtheorem{Proposition}[Theorem]{Proposition}
 { \theoremstyle{definition}
\newtheorem{Example}[Theorem]{Example} }

\begin{document}

\newcommand{\arXivNumber}{1801.00506}

\renewcommand{\thefootnote}{}

\renewcommand{\PaperNumber}{047}

\FirstPageHeading

\ShortArticleName{On the Strong Ratio Limit Property for Discrete-Time Birth-Death Processes}

\ArticleName{On the Strong Ratio Limit Property\\ for Discrete-Time Birth-Death Processes\footnote{This paper is a~contribution to the Special Issue on Orthogonal Polynomials, Special Functions and Applications (OPSFA14). The full collection is available at \href{https://www.emis.de/journals/SIGMA/OPSFA2017.html}{https://www.emis.de/journals/SIGMA/OPSFA2017.html}}}

\Author{Erik A.~VAN DOORN}

\AuthorNameForHeading{E.A.~van Doorn}

\Address{Department of Applied Mathematics, University of Twente,\\
P.O. Box 217, 7500 AE Enschede, The Netherlands}
\Email{\href{mailto:e.a.vandoorn@utwente.nl}{e.a.vandoorn@utwente.nl}}
\URLaddress{\url{http://wwwhome.math.utwente.nl/~doornea/}}

\ArticleDates{Received January 03, 2018, in final form May 13, 2018; Published online May 15, 2018}

\Abstract{A sufficient condition is obtained for a discrete-time birth-death process to possess the {\em strong ratio limit property}, directly in terms of the one-step transition probabilities of the process. The condition encompasses all previously known sufficient conditions.}

\Keywords{(a)periodicity; birth-death process; orthogonal polynomials; random walk
measure; ratio limit; transition probability}

\Classification{60J80; 42C05}

\newcommand{\bQ}{\bar{Q}}
\newcommand{\tQ}{\tilde{Q}}

\renewcommand{\thefootnote}{\arabic{footnote}}
\setcounter{footnote}{0}

\section{Introduction}

In what follows $\mathcal{X} \equiv \{X(n),\, n=0,1,\ldots\}$ is a discrete-time birth-death process on $\mathcal{N}\equiv \{0,1,\dots\}$, with tridiagonal matrix of one-step transition probabilities
\begin{gather*}
P :=
\begin{pmatrix}
r_0 & p_0 & 0 & 0 & 0 & \ldots\cr
 q_1 & r_1 & p_1 & 0 & 0 & \ldots\cr
 0 & q_2 & r_2 & p_2 & 0 & \ldots\cr
 \ldots & \ldots & \ldots & \ldots & \ldots & \ldots\cr
 \ldots & \ldots & \ldots & \ldots & \ldots & \ldots\cr
\end{pmatrix}.
\end{gather*}
Following Karlin and McGregor \cite{KM59} we will refer to $\mathcal{X}$ as a~{\it random walk}. We assume throughout that $p_j>0$, $q_{j+1}>0$, $r_j\geq 0$, and $p_j+q_j+r_j = 1$ for $j\geq 0$, where $q_0 := 0$. We let
\begin{gather}\label{pi}
\pi_0 := 1,\qquad \pi_n := \frac{p_0p_1\cdots p_{n-1}}{q_1q_2\cdots q_n}, \qquad n\geq 1,
\end{gather}
and define the polynomials $Q_n$ via the recurrence relation
\begin{gather}
xQ_n(x)=q_nQ_{n-1}(x)+r_nQ_n(x)+p_nQ_{n+1}(x),\qquad n > 1,\nonumber\\
Q_0(x)=1,\qquad p_0Q_1(x)=x-r_0.\label{recQ}
\end{gather}

Karlin and McGregor \cite{KM59} have shown that the $n$-step transition probabilities
\begin{gather*}
P_{ij}(n):=\Pr\{X(n)=j\,|\,X(0)=i\} = (P^n)_{ij}, \qquad n\geq 0, \qquad i,j\in\mathcal{N},
\end{gather*}
may be represented in the form
\begin{gather}\label{rep}
P_{ij}(n) = \pi_j\int_{[-1,1]}x^nQ_i(x)Q_j(x)\psi(dx),
\end{gather}
where $\psi$ is the (unique) Borel measure on the interval $[-1,1]$, of total mass~1 and with infinite support, with respect to which the polynomials~$Q_n$ are orthogonal. Adopting the terminology of~\cite{DS93} we will refer to the measure $\psi$ as a {\it random walk measure}. Of particular interest to us is $\eta := \sup \operatorname{supp}(\psi)$, the largest point in the support of the random walk measure $\psi$, which may also be characterized in terms of the polynomials~$Q_n$ by
\begin{gather}
\label{pos}
x \geq \eta \ \Longleftrightarrow \ Q_n(x) > 0 \qquad \mbox{for all} \ \ n \geq 0
\end{gather}
(see, for example, Chihara \cite[Theorem II.4.1]{C78}). We will see in the next section that $\eta > 0$.

The random walk $\mathcal{X}$ is said to have the {\em strong ratio limit property $($SRLP$)$} if the limits
\begin{gather}\label{limits}
\lim_{n\to\infty} \frac{P_{ij}(n)}{P_{kl}(n)}, \qquad i,j,k,l \in \mathcal{N},
\end{gather}
exist simultaneously. The SRLP was introduced in the more general setting of discrete-time Markov chains on a countable state space by Orey~\cite{O61} and Pruitt~\cite{P65}, but the problem of finding conditions for the limits~\eqref{limits} to exist in the specific setting of random walks had been considered before in~\cite{KM59}. A satisfactory and comprehensive solution to the problem of finding conditions for the SRLP is still lacking,
even in the relatively simple setting at hand. So it remains a~challenge to find necessary and/or sufficient conditions. For more information on the history of the problem we refer to~\cite{DS95b} and~\cite{K95}.

In \cite[Theorem~3.1]{DS95b} a necessary and sufficient condition for the random walk $\mathcal{X}$ to have the SRLP has been given in terms of the associated random walk measure~$\psi$. Namely, letting
\begin{gather}\label{C}
C_n(\psi) := \frac{\int_{[-1,0)}(-x)^n\psi(dx)}{\int_{(0,1]} x^n\psi(dx)}, \qquad n \geq 0,
\end{gather}
the limits \eqref{limits} exist simultaneously if and only if
\begin{gather}\label{limC}
\lim_{n\to\infty}C_n(\psi) = 0,
\end{gather}
in which case we have
\begin{gather*}
\lim_{n\to\infty} \frac{P_{ij}(n)}{P_{kl}(n)} = \frac{\pi_jQ_i(\eta)Q_j(\eta)}{\pi_lQ_k(\eta)Q_l(\eta)}, \qquad i,j,k,l \in\mathcal{N}.
\end{gather*}
Note that the denominator in \eqref{C} is positive since $\eta>0$, so that $C_n(\psi)$ exists and is nonnegative for all $n$. Some sufficient conditions for \eqref{limC} -- and, hence, for $\mathcal{X}$ to possess the SRLP -- are also given in~\cite{DS95b}. In particular, \cite[Theorem~3.2]{DS95b} tells us that
\begin{gather}\label{QC}
\lim_{n\to\infty} |Q_n(-\eta)/Q_n(\eta)| = \infty \ \Rightarrow \ \lim_{n\to\infty} C_n(\psi) = 0.
\end{gather}
The reverse implication is conjectured in \cite{DS95b} to be valid as well.

In this paper we will prove a sufficient condition for $\mathcal{X}$ to have the SRLP directly in terms of the one-step transition probabilities. Concretely, we will establish the following result.

\begin{Proposition}\label{proposition}
If the random walk $\mathcal{X}$ satisfies
\begin{gather}\label{condition}
\sum_{j\geq 0} \frac{1}{p_j\pi_j} \sum_{k=0}^j r_k\pi_k =\infty,
\end{gather}
then $\lim\limits_{n\to\infty} |Q_n(-\eta)/Q_n(\eta)| = \infty$.
\end{Proposition}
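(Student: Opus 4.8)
The plan is to set $a_n:=Q_n(\eta)$ and $\beta_n:=(-1)^nQ_n(-\eta)$ and to prove that $s_n:=\beta_n/a_n\to\infty$; once $\beta_n>0$ is known, this ratio equals $|Q_n(-\eta)/Q_n(\eta)|$, which is what is wanted. By \eqref{pos} we have $a_n>0$, and from \eqref{recQ} one sees that $a_n$ satisfies $p_na_{n+1}=(\eta-r_n)a_n-q_na_{n-1}$ while $\beta_n$ satisfies the reflected recurrence $p_n\beta_{n+1}=(\eta+r_n)\beta_n-q_n\beta_{n-1}$. First I would record the preliminary fact that $\beta_n>0$ for every $n$. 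This is equivalent to $-\eta\le\inf\operatorname{supp}(\psi)$, for then the sign rule companion to \eqref{pos}, namely $(-1)^nQ_n(x)>0$ whenever $x\le\inf\operatorname{supp}(\psi)$, applies at $x=-\eta$. To obtain $-\eta\le\inf\operatorname{supp}(\psi)$ I would use that $P_{00}(m)=\int x^m\psi(dx)\ge0$ for all $m$ by \eqref{rep}: bounding the positive part of this integral by $\eta^m$ for odd $m$ forces $\int_{x<0}|x|^m\psi(dx)\le\eta^m$, which fails for large odd $m$ if $\operatorname{supp}(\psi)$ reached below $-\eta$.

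The engine will be two discrete Wronskian identities, each obtained by multiplying the relevant recurrences, subtracting, and telescoping with the help of $\pi_np_n=\pi_{n+1}q_{n+1}$ from \eqref{pi}. Comparing $a_n$ with the constant solution $Q_n(1)\equiv1$ (a solution because $p_j+q_j+r_j=1$) should give
\begin{gather*}
\pi_np_n\big(a_{n+1}-a_n\big)=(\eta-1)\sum_{k=0}^n\pi_ka_k.
\end{gather*}
Since $\operatorname{supp}(\psi)\subseteq[-1,1]$ forces $\eta\le1$, the right-hand side is $\le0$, so $Q_n(\eta)$ is non-increasing; in particular $a_k\ge a_n\ge a_{n+1}$ for $k\le n$. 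Comparing $a_n$ with $\beta_n$ should give
\begin{gather*}
\pi_np_n\big(a_n\beta_{n+1}-a_{n+1}\beta_n\big)=2\sum_{k=0}^nr_k\pi_ka_k\beta_k.
\end{gather*}
Dividing by $a_na_{n+1}>0$ turns the left-hand side into $s_{n+1}-s_n$, which is therefore $\ge0$; hence $s_n$ is non-decreasing, $s_n\ge s_0=1$, and $\beta_k\ge a_k$ for all $k$.

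Finally I would feed the monotonicity of $a_n$ into the increment of $s_n$ to recover exactly the series in \eqref{condition}. Using $\beta_k\ge a_k$ and then $a_k^2\ge a_na_{n+1}$ for $k\le n$ one gets
\begin{gather*}
\sum_{k=0}^nr_k\pi_ka_k\beta_k\ge\sum_{k=0}^nr_k\pi_ka_k^2\ge a_na_{n+1}\sum_{k=0}^nr_k\pi_k,
\end{gather*}
so that $s_{n+1}-s_n\ge\dfrac{2}{\pi_np_n}\sum_{k=0}^nr_k\pi_k$. Summing over $n$ and invoking the hypothesis \eqref{condition} then yields $s_n\to\infty$, as required.

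The step I expect to be the real obstacle is precisely this last matching. The natural Wronskian comparison of $Q_n(\pm\eta)$ delivers the weight $a_k\beta_k$ inside the sum and a factor $(a_na_{n+1})^{-1}$ outside, neither of which occurs in \eqref{condition}; without further information these polynomial values need not cancel. What rescues the estimate is the monotonicity $Q_{n+1}(\eta)\le Q_n(\eta)$ extracted from the comparison with the trivial solution $Q_n(1)\equiv1$: it is exactly what makes $a_k^2\ge a_na_{n+1}$ available and lets all polynomial values drop out, leaving $\sum_{k\le n}r_k\pi_k$. I would therefore isolate the positivity of $\beta_n$ and this monotonicity as the two lemmas to establish before assembling the final bound.
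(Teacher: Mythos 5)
Your proof is correct, and it takes a genuinely different route from the paper's. The paper never works directly at $\pm\eta$: it conjugates the walk by $Q_n(\eta)$, introducing the auxiliary walk $\mathcal{X}_\theta$ of \eqref{probaux} with polynomials $Q_n(x;\theta)=Q_n(\theta x)/Q_n(\theta)$, applies Lemma~\ref{bQlim} (divergence of $(-1)^nQ_n(-1)$ if and only if divergence of the $M$-series) to $\mathcal{X}_\eta$ via Corollary~\ref{corollary}, and then needs Lemma~\ref{M} -- the monotonicity $M_n(\eta)\geq M_n(1)$, proved by a state-by-state ``change operation'' -- to convert the hypothesis \eqref{condition}, which is $M_\infty(1)=\infty$, into $M_\infty(\eta)=\infty$. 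Your two telescoped Wronskian identities both check out (base cases $p_0(a_1-a_0)=\eta-1$ and $p_0(a_0\beta_1-a_1\beta_0)=2r_0$, then induction using $\pi_nq_n=\pi_{n-1}p_{n-1}$), and the second one is, in disguise, exactly the paper's recursion \eqref{bQn} written for the conjugated walk: your $s_n$ equals $(-1)^nQ_n(-1;\eta)$ in the paper's notation. Where you genuinely diverge is the last step: in place of the change-operation argument of Lemma~\ref{M} you use $s_k\geq 1$ (so $\beta_k\geq a_k$) together with $a_k\geq a_n\geq a_{n+1}$ for $k\leq n$ (the paper's \eqref{mon1}, which you rederive by comparing against the constant solution $Q_n(1)\equiv1$) to get $a_k\beta_k\geq a_k^2\geq a_na_{n+1}$, so all polynomial values cancel and $s_{n+1}-s_n\geq\frac{2}{p_n\pi_n}\sum_{k=0}^n r_k\pi_k$. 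This is shorter and more quantitative than the paper's argument -- it gives the explicit bound $|Q_n(-\eta)/Q_n(\eta)|\geq 1+2M_{n-1}(1)$ and avoids both the auxiliary-walk formalism and Lemma~\ref{M} -- though it forgoes the paper's reusable intermediate results (the equivalence in Corollary~\ref{corollary} for every $\theta\geq\eta$ and the monotonicity of $M_n(\theta)$ in $\theta$, both used again in Section~\ref{conc}). Two small remarks: your support argument for $\beta_n>0$ is fine, but the paper's Section~\ref{prel} already records $\operatorname{supp}(\psi)\subset[-\eta,\eta]$, so you could simply cite it; better yet, it is dispensable, since $\beta_{n+1}\geq a_{n+1}>0$ follows by induction from your second identity, the inductive hypothesis $\beta_k\geq a_k>0$ for $k\leq n$ making its right-hand side nonnegative.
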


Together with \eqref{QC} this result immediately leads to the following.

\begin{Theorem}\label{theorem}
If the random walk $\mathcal{X}$ satisfies~\eqref{condition} then $\mathcal{X}$ possesses the SRLP.
\end{Theorem}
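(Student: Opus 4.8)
Since Theorem~\ref{theorem} follows at once from Proposition~\ref{proposition} via the implication \eqref{QC} and the characterization of \cite[Theorem~3.1]{DS95b}, the entire burden is to establish Proposition~\ref{proposition}, and it is for this that I sketch a plan. The plan is to control the two \emph{positive} sequences $A_n:=Q_n(\eta)$ and $\hat A_n:=(-1)^nQ_n(-\eta)$ and to show that $\hat A_n/A_n\to\infty$, which is precisely the assertion since $A_n>0$ for all $n$ by \eqref{pos}. First I would record two preliminary facts. Setting $\hat Q_n(x):=(-1)^nQ_n(-x)$ and substituting into \eqref{recQ} shows that the $\hat Q_n$ satisfy the same three-term recurrence as the $Q_n$ but with each $r_n$ replaced by $-r_n$, so that $\hat A_n=\hat Q_n(\eta)$. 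Next, because $\eta\le 1$, writing $\epsilon:=1-\eta\ge 0$ recasts \eqref{recQ} at $x=\eta$ as $p_n(A_n-A_{n+1})=q_n(A_{n-1}-A_n)+\epsilon A_n$, and a one-line induction (base case $A_0-A_1=\epsilon/p_0\ge0$) shows that $(A_n)$ is non-increasing.

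The step I expect to be the main obstacle is the positivity of $\hat A_n$. By Favard's theorem the $\hat Q_n$ are orthogonal with respect to some measure $\hat\psi$, so the analogue of \eqref{pos} gives $\hat Q_n(x)>0$ for all $n$ exactly when $x\ge\hat\eta:=\sup\operatorname{supp}(\hat\psi)$; it therefore suffices to prove $\eta\ge\hat\eta$. I would deduce this by comparing the bounded self-adjoint Jacobi matrices $J$ and $\hat J$ associated with $(Q_n)$ and $(\hat Q_n)$: they share the off-diagonal entries $\sqrt{p_nq_{n+1}}$ but carry diagonals $(r_n)$ and $(-r_n)$, so $J-\hat J=\operatorname{diag}(2r_n)\ge 0$. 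Since the supremum of the spectrum of such a matrix equals the supremum of the support of its spectral measure, the operator inequality $J\ge\hat J$ yields $\eta=\sup\operatorname{spec}(J)\ge\sup\operatorname{spec}(\hat J)=\hat\eta$, whence $\hat A_n>0$ for every $n$.

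With positivity and monotonicity secured, the engine is a discrete Green (Casoratian) identity. Multiplying the recurrence for $A_n$ by $\hat A_n$, that for $\hat A_n$ by $A_n$, subtracting, multiplying by $\pi_n$ and using $\pi_nq_n=\pi_{n-1}p_{n-1}$ yields a telescoping relation that sums to
\[
\pi_Np_N\bigl(A_{N+1}\hat A_N-\hat A_{N+1}A_N\bigr)=-2\sum_{n=0}^N r_n\pi_n A_n\hat A_n .
\]
As $A_n,\hat A_n>0$, the right-hand side is nonpositive, so $\rho_n:=\hat A_n/A_n$ is non-decreasing; together with $\rho_0=1$ this already forces $\hat A_n\ge A_n$. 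Dividing the identity by $A_NA_{N+1}$ gives
\[
\rho_{N+1}-\rho_N=\frac{2}{\pi_Np_NA_NA_{N+1}}\sum_{n=0}^N r_n\pi_n A_n\hat A_n ,
\]
and bounding $A_n\hat A_n\ge A_n^2\ge A_NA_{N+1}$ for $n\le N$ (the first inequality from $\hat A_n\ge A_n$, the second from monotonicity of $(A_n)$) collapses this to
\[
\rho_{N+1}-\rho_N\ge\frac{2}{\pi_Np_N}\sum_{n=0}^N r_n\pi_n .
\]
Summing over $N$ and invoking hypothesis \eqref{condition} then forces $\rho_N\to\infty$, that is, $|Q_n(-\eta)/Q_n(\eta)|\to\infty$, which is the conclusion of Proposition~\ref{proposition}.
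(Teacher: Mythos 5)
Your proposal is correct, and it proves Proposition~\ref{proposition} by a genuinely different route than the paper. You work directly at $x=\pm\eta$: writing $\hat Q_n(x)=(-1)^nQ_n(-x)$ (same recurrence with $r_n$ replaced by $-r_n$), you combine positivity of $\hat A_n=\hat Q_n(\eta)$, monotonicity of $A_n=Q_n(\eta)$, and the Casoratian identity
\begin{gather*}
\pi_Np_N\bigl(A_{N+1}\hat A_N-\hat A_{N+1}A_N\bigr)=-2\sum_{n=0}^N r_n\pi_nA_n\hat A_n,
\end{gather*}
which, after dividing by $A_NA_{N+1}$ and using $A_n\hat A_n\ge A_N A_{N+1}$ for $n\le N$, gives $\rho_{N+1}-\rho_N\ge\frac{2}{\pi_Np_N}\sum_{n=0}^N r_n\pi_n$ for $\rho_N:=\hat A_N/A_N$, so that \eqref{condition} forces $\rho_N\to\infty$; all of these steps check out. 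Note that your identity is precisely the two-solution generalization of the paper's \eqref{bQn1}, which is recovered in the special case $\eta=1$ where $A_n\equiv 1$; likewise your monotonicity induction reproves \eqref{mon1}, and the positivity of $\hat A_n$ -- the step you flagged as hardest -- follows at once from $\operatorname{supp}(\psi)\subset[-\eta,\eta]$, which the paper records in Section~\ref{prel} citing \cite[Lemma~2.3]{DS95a}, so you could cite that instead of the Jacobi-operator comparison $J\ge\hat J$ (your operator argument is nevertheless valid, since $e_0$ is cyclic and the support of each orthogonality measure equals the spectrum of its Jacobi matrix). The paper proceeds quite differently: it proves the divergence result only at $x=-1$ (Lemma~\ref{bQlim}, where $Q_n(1)=1$ collapses the identity), transplants it to arbitrary $\theta\ge\eta$ via the transformed random walk $\mathcal{X}_\theta$ of \eqref{probaux} (Corollary~\ref{corollary}), and then bridges $\theta=1$ to $\theta=\eta$ with the comparison Lemma~\ref{M} ($M_n(\theta)$ nonincreasing in $\theta$), proved by a state-by-state perturbation argument. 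Your route is shorter and self-contained, avoiding both the $\mathcal{X}_\theta$ machinery and Lemma~\ref{M}; what the paper's heavier machinery buys is structure beyond the theorem itself -- the two-sided equivalence \eqref{rho0eq} for every $\theta\ge\eta$, the connection to $\eta$-recurrence in \eqref{Leta}, and the monotonicity of $M_n(\theta)$ that underpins the concluding conjectures and the example -- whereas your argument yields only the one implication the theorem needs.
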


We will see that Theorem~\ref{theorem} encompasses all previously obtained sufficient conditions for the SRLP.

The proof of Proposition~\ref{proposition} will be based on three lemmas. Lemma~\ref{bQlim} and a number of preliminary results related to the polynomials~$Q_n$ and the orthogonalizing measure $\psi$ are collected in the next section. Two further auxiliary lemmas are established in Section~\ref{aux}. The actual proof of Proposition~\ref{proposition} and some concluding remarks can be found in Section~\ref{conc}, which also contains an example showing that~\eqref{condition} is not {\em necessary} for the SRLP.

\section{Preliminaries}\label{prel}

Whitehurst \cite[Theorem~1.6]{W82} has shown that the random walk measure $\psi$ satisfies
\begin{gather}\label{rw}
\int_{[-1,1]} xQ_n^2(x)\psi(dx) \geq 0, \qquad n \geq 0,
\end{gather}
and, conversely, that any Borel measure $\psi$ on the interval $[-1,1]$, of total mass 1 and with infinite support, is a random walk measure if it satisfies~\eqref{rw} (see also \cite[Theorem~1.2]{DS93}). Evidently, \eqref{rw} implies $\eta = \sup \operatorname{supp}(\psi) >0$, but it can actually be shown (see, for example, \cite[Corollary~2 to Theorem~IV.2.1]{C78}) that
\begin{gather*}
\eta > r_j, \qquad j \in\mathcal{N}.
\end{gather*}
By \cite[Lemma~2.3]{DS95a} we also have
\begin{gather*}
\inf_j \{r_j+r_{j+1}\}\leq \inf \operatorname{supp}(\psi) + \eta \leq \sup_j\{r_j+r_{j+1}\}, \qquad j \in\mathcal{N},
\end{gather*}
so that $\inf \operatorname{supp}(\psi)\geq -\eta$, and hence $\operatorname{supp}(\psi) \subset [-\eta,\eta]$.

The measure $\psi$ is symmetric about $0$ if (and only if) the random walk $\mathcal{X}$ is {\em periodic}, that is, if $r_j=0$ for all $j$ (see \cite[p.~69]{KM59}). In this case we also have
\begin{gather*}
(-1)^nQ_n(-x) = Q_n(x), \qquad n \geq 0,
\end{gather*}
and it follows from \eqref{rep} that $P_{ij}(n) =0$ if $n+i+j$ is odd. Hence the limits in~\eqref{limits} will not exist if $\mathcal{X}$ is periodic, which is also reflected by the fact that $C_n(\psi) = 1$ for all $n$ in this case.

$\mathcal{X}$ is called {\em aperiodic} if it is not periodic. From Whitehurst \cite[Theorem~5.2]{W78} we have the subtle result
\begin{gather*}
\mathcal{X} \mbox{~is~aperiodic} \ \Rightarrow \ \int_{[-\eta,\eta]}\frac{\psi(dx)}{\eta+x} < \infty,
\end{gather*}
so that $\psi(\{-\eta\}) = 0$ if $\mathcal{X}$ is aperiodic.

We continue with some useful observations from the recurrence relations \eqref{recQ}. The first one is the {\em Christoffel--Darboux} identity
\begin{gather*}
p_n\pi_n(Q_n(x)Q_{n+1}(y)-Q_n(y)Q_{n+1}(x))=(y-x)\sum_{j=0}^n\pi_jQ_j(x)Q_j(y)
\end{gather*}
(see, for example, \cite[Theorem I.4.5]{C78}). Hence, by \eqref{pos},
\begin{gather}\label{etaleq}
\eta \leq x < y \ \Rightarrow \ Q_n(x)Q_{n+1}(y) > Q_n(y)Q_{n+1}(x) > 0 \qquad \mbox{for all} \ \ n \geq 0.
\end{gather}
Since $p_j+q_j+r_j=1$ for all $j$ it follows readily from \eqref{recQ} that $Q_n(1)=1$ for all $n$, so~\eqref{etaleq} leads to
\begin{gather} \label{mon1}
\eta \leq x < 1 \ \Rightarrow \ 0 < Q_{n+1}(x) < Q_n(x) < Q_0(x)=1 \qquad \mbox{for all} \ \ n \geq 1.
\end{gather}

Next, writing $\bQ_n(x):= (-1)^nQ_n(x)$, we see from \eqref{recQ} that
\begin{gather*}
p_n\pi_n(\bQ_{n+1}(x)-\bQ_n(x)) =p_{n-1}\pi_{n-1}(\bQ_n(x)-\bQ_{n-1}(x))\\
\hphantom{p_n\pi_n(\bQ_{n+1}(x)-\bQ_n(x)) =}{} +(2r_n-1-x)\pi_n \bQ_n(x), \qquad n \geq 1,\\
p_0\pi_0(\bQ_1(x)-\bQ_0(x)) = (2r_0-1-x)\pi_0 \bQ_0(x),
\end{gather*}
from which we readily obtain
\begin{gather*}
\bQ_{n+1}(x) = 1 + \sum_{j=0}^n \frac{1}{p_j\pi_j} \sum_{k=0}^j(2r_k-1-x) \pi_k \bQ_k(x), \qquad n \geq 0,
\end{gather*}
and hence
\begin{gather}\label{bQn1}
\bQ_{n+1}(-1) = 1 + 2\sum_{j=0}^n \frac{1}{p_j\pi_j} \sum_{k=0}^j r_k\pi_k \bQ_k(-1),\qquad n \geq 0.
\end{gather}
This equation, observed already by Karlin and McGregor \cite[p.~76]{KM59}, leads to the first of our three lemmas.

\begin{Lemma}\label{bQlim}
The sequence $\{(-1)^nQ_n(-1)\}_n$ is increasing, and strictly increasing for $n$ sufficiently large, if $($and only if$)$ $\mathcal{X}$ is aperiodic. Moreover,
\begin{gather}\label{aa}
\sum_{j\geq 0} \frac{1}{p_j\pi_j} \sum_{k=0}^j r_k\pi_k =\infty \ \Longleftrightarrow \ \lim_{n\to\infty}(-1)^nQ_n(-1) = \infty.
\end{gather}
\end{Lemma}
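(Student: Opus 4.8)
The plan is to extract everything from the recurrence \eqref{bQn1} and the one-step difference identity displayed immediately before it. Write $u_n := (-1)^nQ_n(-1) = \bQ_n(-1)$, so $u_0 = 1$, and abbreviate $a_j := (p_j\pi_j)^{-1}\sum_{k=0}^j r_k\pi_k \ge 0$, so that the left-hand side of \eqref{aa} is $\sum_{j\ge 0} a_j$. Setting $d_n := p_n\pi_n(u_{n+1}-u_n)$, the difference identity at $x=-1$ (where $2r_n-1-x = 2r_n$) reads $d_n = d_{n-1} + 2r_n\pi_n u_n$ with $d_0 = 2r_0$, and therefore telescopes to $d_n = 2\sum_{k=0}^n r_k\pi_k u_k$. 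First I would show by induction that $u_n \ge 1$ for all $n$: if $u_k \ge 1$ for $k \le n$ then every summand in $d_n$ is nonnegative, so $d_n \ge 0$ and hence $u_{n+1} = u_n + d_n/(p_n\pi_n) \ge u_n \ge 1$. This argument simultaneously proves that $\{u_n\}$ is nondecreasing.

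For the (a)periodicity dichotomy I would treat the two cases directly. If $\mathcal{X}$ is periodic, then $r_j = 0$ for every $j$, so $d_n = 0$ and $u_n \equiv 1$; the sequence is constant and in particular not eventually strictly increasing. If $\mathcal{X}$ is aperiodic, let $m$ be the least index with $r_m > 0$; since $u_m \ge 1 > 0$ and $\pi_m > 0$, we have $d_n \ge 2r_m\pi_m u_m > 0$ for every $n \ge m$, so $u_{n+1} > u_n$ for all $n \ge m$. This gives strict monotonicity from $m$ onward, and reading the periodic case contrapositively supplies the ``only if''.

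It remains to establish the equivalence \eqref{aa}. The implication $\sum_j a_j = \infty \Rightarrow u_n \to \infty$ is the easy one: inserting the bound $u_k \ge 1$ into \eqref{bQn1} gives $u_{n+1} \ge 1 + 2\sum_{j=0}^n a_j$, which tends to infinity precisely when the series diverges.

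The reverse implication -- equivalently $\sum_j a_j < \infty \Rightarrow \{u_n\}$ bounded -- is where the real work lies, and I expect this to be the main obstacle. Here I would use monotonicity in the opposite direction: from $u_{n+1}-u_n = (2/(p_n\pi_n))\sum_{k=0}^n r_k\pi_k u_k$ and $u_k \le u_n$ for $k \le n$ one gets the Gronwall-type estimate $u_{n+1} \le (1 + 2a_n)u_n$. Iterating from $u_0 = 1$ and using $1+x \le e^x$ yields $u_{n+1} \le \prod_{j=0}^n(1+2a_j) \le \exp(2\sum_{j\ge 0} a_j)$, a finite bound as soon as $\sum_j a_j < \infty$; hence $\{u_n\}$ is bounded and cannot tend to infinity, which is the desired contrapositive. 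The one point to keep straight is that the monotonicity proved at the start is used with opposite signs in the two directions: as the lower bound $u_k \ge 1$ for the easy implication, and as the upper bound $u_k \le u_n$ to close the Gronwall estimate for the hard one.
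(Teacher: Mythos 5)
Your proof is correct and follows essentially the same route as the paper: the telescoped recurrence $\bQ_{n+1}(-1)=\bQ_n(-1)+\frac{2}{p_n\pi_n}\sum_{k=0}^n r_k\pi_k\bQ_k(-1)$, the lower bound $\bQ_n(-1)\geq 1$ for the divergence direction, and the monotonicity-based bound $u_{n+1}\leq(1+2a_n)u_n$ with the resulting convergent product for the converse. The only cosmetic differences are that you spell out the (a)periodicity dichotomy the paper dismisses as ``obviously true'' and use $1+x\leq e^x$ where the paper invokes the fact that $\prod_j(1+\beta_j)$ and $\sum_j\beta_j$ converge together.
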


\begin{proof}Since $\bQ_0(-1)=1$, while, by \eqref{bQn1},
\begin{gather}\label{bQn}
\bQ_{n+1}(-1) = \bQ_n(-1) + \frac{2}{p_n\pi_n} \sum_{k=0}^n r_k\pi_k \bQ_k(-1), \qquad n \geq 0,
\end{gather}
the first statement is obviously true. So we have $\bQ_n(-1)\geq 1$, which, in view of \eqref{bQn1} implies the necessity in the second statement. To prove the sufficiency we let
\begin{gather*}
\beta_j := \frac{2}{p_j\pi_j} \sum_{k=0}^j r_k\pi_k, \qquad j \geq 0,
\end{gather*}
and assume that $\sum_j \beta_j$ converges. By \eqref{bQn} we then have
\begin{gather*}
\bQ_{n+1}(-1) \leq \bQ_n(-1)(1 + \beta_n), \qquad n \geq 0,
\end{gather*}
since $\bQ_n(-1)$ is increasing in $n$. It follows that
\begin{gather*}
\bQ_{n+1}(-1) \leq \prod_{j=0}^n (1 + \beta_j), \qquad n \geq 0.
\end{gather*}
But since $\prod_j (1 + \beta_j)$ and $\sum_j \beta_j$ converge together, we must have $\lim\limits_{n\to\infty}\bQ_n(-1) < \infty$.
\end{proof}

The above lemma also plays a central role in \cite{D18}, where the conditions in~\eqref{aa} are shown to be equivalent to {\em asymptotic aperiodicity} of the random walk. For completeness' sake we have included the proof.

We recall from \cite{KM59} that $\mathcal{X}$ is {\em recurrent}, that is, the probability, for any state, of returning to that state is one, if and only if
\begin{gather} \label{L}
L := \sum_{j\geq 0} \frac{1}{p_j\pi_j} = \infty.
\end{gather}
$\mathcal{X}$ is called {\em transient} if it is not recurrent. It has been shown in \cite{KM59} that
\begin{gather*}
\int_{[-\eta,\eta]}\frac{\psi(dx)}{1-x} = L,
\end{gather*}
so we must have $\eta=1$ if $\mathcal{X}$ is recurrent. From Lemma~\ref{bQlim} we now obtain
\begin{gather}\label{recurrent}
\mathcal{X} \ \mbox{is~aperiodic~and~recurrent} \ \Rightarrow \ \lim_{n\to\infty}(-1)^nQ_n(-1) = \infty,
\end{gather}
a result noted earlier by Karlin and McGregor \cite[p.~76]{KM59}. Considering \eqref{QC} and the fact that $\eta=1$ if $\mathcal{X}$ is recurrent, the conclusion in \eqref{recurrent} implies the SRLP, so that we have regained
\cite[Theorem~2]{KM59}. (This result was later generalized to {\em symmetrizable} Markov chains by Orey \cite[Theorem~2]{O61}.) For later use we also note that
\begin{gather}\label{ineq}
\sum_{j\geq 0} \frac{1}{p_j\pi_j} \sum_{k=0}^j r_k\pi_k \geq \sum_{j\geq 0} \frac{r_j}{p_j},
\end{gather}
so that, by Lemma~\ref{bQlim},
\begin{gather} \label{rjpj}
\sum_{j\geq 0} \frac{r_j}{p_j} = \infty \ \Rightarrow \ \lim_{n\to\infty}(-1)^nQ_n(-1) = \infty.
\end{gather}

\section{Two auxiliary lemmas}\label{aux}

Throughout this section $\theta$ is a fixed number satisfying $\theta\geq \eta$. Defining $q_0(\theta):=0$ and
\begin{gather}\label{probaux}
p_j(\theta) := \frac{Q_{j+1}(\theta)}{Q_j(\theta)}\frac{p_j}{\theta},\qquad
r_j(\theta) := \frac{r_j}{\theta},\qquad
q_{j+1}(\theta) := \frac{Q_j(\theta)}{Q_{j+1}(\theta)}\frac{q_{j+1}}{\theta},
\qquad j \in \mathcal{N},
\end{gather}
the parameters $p_j(\theta)$, $q_j(\theta)$ and $r_j(\theta)$ satisfy $p_j(\theta)>0$, $q_{j+1}(\theta)>0$, $r_j(\theta)\geq 0$, and $p_j(\theta)+q_j(\theta)+r_j(\theta)=1$, so that they may be interpreted as the one-step transition probabilities of a~random walk $\mathcal{X}_\theta$ on $\mathcal{N}$. Denoting the corresponding polynomials by $Q_n(\cdot ;\theta)$ it follows readily that
\begin{gather}\label{newQ}
Q_n(x;\theta) = \frac{Q_n(\theta x)}{Q_n(\theta)}, \qquad n \geq 0,
\end{gather}
so that the associated measure $\psi_\theta$ satisfies
\begin{gather*}
\psi_\theta([-1,x]) = \psi([-\theta,x\theta]), \qquad -1 \leq x \leq 1.
\end{gather*}
Evidently, we have
\begin{gather*}
\eta(\theta) := \sup \operatorname{supp}(\psi_\theta) = \eta\theta^{-1}\leq 1,
\end{gather*}
while the analogues $\pi_n(\theta)$ of the constants $\pi_n$ of \eqref{pi} are easily seen to satisfy
\begin{gather}\label{newpi}
\pi_n(\theta) = \pi_n Q_n^2(\theta), \qquad n \geq 0.
\end{gather}
(In \cite[Appendix~2]{DS95a}) the special case $\theta=\eta$ is considered.) Obviously, $\mathcal{X}_\theta$ is periodic if and only if~$\mathcal{X}$ is periodic. Note that by choosing $\theta=1$ we return to the setting of the previous sections.

We have seen in Lemma~\ref{bQlim} that $(-1)^nQ_n(-1;\theta)$ is increasing, and strictly increasing for~$n$ sufficiently large, if $\mathcal{X}_\theta$ is aperiodic, or, equivalently, $\mathcal{X}$ is aperiodic. It thus follows from~\eqref{newQ} that $|Q_n(\theta)/Q_n(-\theta)|$ is decreasing, and strictly decreasing for $n$ sufficiently large, if $\mathcal{X}$ is aperiodic. Since $Q_n(-x;\theta) = (-1)^nQ_n(x;\theta)$ if~$\mathcal{X}_\theta$ is periodic, we conclude the following.

\begin{Lemma}\label{bounds}Let $\theta\geq\eta$. If $\mathcal{X}$ is periodic then $|Q_n(\theta)/Q_n(-\theta)| =1$ for all~$n$. If $\mathcal{X}$ is aperiodic then $|Q_n(\theta)/Q_n(-\theta)|$ is decreasing and tends to a~limit satisfying
\begin{gather*}
0 \leq \lim_{n\to\infty}|Q_n(\theta)/Q_n(-\theta)| < 1.
\end{gather*}
\end{Lemma}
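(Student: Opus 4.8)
The plan is to push everything through to the auxiliary random walk $\mathcal{X}_\theta$ and then read the conclusions off Lemma~\ref{bQlim}. The starting point is \eqref{newQ}, which at $x=-1$ gives $Q_n(-1;\theta) = Q_n(-\theta)/Q_n(\theta)$, so that
\[
(-1)^n Q_n(-1;\theta) = (-1)^n \frac{Q_n(-\theta)}{Q_n(\theta)}, \qquad n \geq 0.
\]
Because $\theta \geq \eta$, characterization \eqref{pos} gives $Q_n(\theta) > 0$, and since $\mathcal{X}_\theta$ is periodic exactly when $\mathcal{X}$ is, Lemma~\ref{bQlim} applies verbatim to $\mathcal{X}_\theta$. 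In particular $(-1)^n Q_n(-1;\theta)$ is increasing from the value $1$ at $n=0$, hence positive throughout; together with $Q_n(\theta)>0$ this forces $(-1)^n Q_n(-\theta)>0$, so the displayed quantity equals $|Q_n(-\theta)|/Q_n(\theta) = |Q_n(-\theta)/Q_n(\theta)|$. The sequence I actually want is then its reciprocal, $|Q_n(\theta)/Q_n(-\theta)| = 1/\big((-1)^n Q_n(-1;\theta)\big)$.

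With this identification in hand the periodic case is immediate: periodicity of $\mathcal{X}_\theta$ gives $Q_n(-x;\theta)=(-1)^n Q_n(x;\theta)$, so evaluating at $x=1$ and using $Q_n(1;\theta)=1$ (itself a direct consequence of \eqref{newQ}) yields $(-1)^n Q_n(-1;\theta)=1$, whence $|Q_n(\theta)/Q_n(-\theta)|=1$ for every $n$.

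For the aperiodic case I would invoke the sharper half of Lemma~\ref{bQlim}: the sequence $(-1)^n Q_n(-1;\theta)$ is increasing and strictly increasing for $n$ sufficiently large. Passing to reciprocals, $|Q_n(\theta)/Q_n(-\theta)|$ is decreasing and bounded below by $0$, hence convergent to a limit in $[0,1]$. To exclude the endpoint $1$, I would use that the sequence starts at $1$ for $n=0$ and is \emph{strictly} increasing somewhere past it, forcing $(-1)^n Q_n(-1;\theta)>1$ for all sufficiently large $n$; taking reciprocals then pins the limit strictly below $1$.

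The only delicate point is the sign and reciprocal bookkeeping of the first step, namely confirming that $|Q_n(\theta)/Q_n(-\theta)|$ is genuinely the reciprocal of the monotone sequence controlled by Lemma~\ref{bQlim}, which hinges on $(-1)^n Q_n(-\theta)$ being positive. Once this is settled, the monotonicity, the bound $0\le\lim<1$, and the initial normalization $Q_0\equiv 1$ deliver the result with no further computation.
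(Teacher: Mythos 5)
Your proof is correct and follows essentially the same route as the paper: both apply Lemma~\ref{bQlim} to the auxiliary walk $\mathcal{X}_\theta$, translate back via \eqref{newQ} (so that $(-1)^nQ_n(-1;\theta)$ becomes $|Q_n(-\theta)/Q_n(\theta)|$ after the sign check), and handle the periodic case with the symmetry $Q_n(-x;\theta)=(-1)^nQ_n(x;\theta)$. Your explicit bookkeeping of the positivity of $(-1)^nQ_n(-\theta)$ and the strict-increase argument pinning the limit below $1$ are details the paper leaves implicit, but the underlying argument is identical.
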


In what follows we let
\begin{gather}\label{Mntheta}
M_n(\theta):=\sum_{j=0}^n \frac{1}{p_j(\theta)\pi_j(\theta)}\sum_{k=0}^j r_k(\theta)\pi_k(\theta), \qquad 0\leq n \leq \infty,
\end{gather}
so that in particular $M_\infty(1)$ equals the left-hand side of~\eqref{condition}. In combination with Lemma~\ref{bQlim}, interpreted in terms of $\mathcal{X}_\theta$, Lemma~\ref{bounds} gives us the next result.
\begin{Corollary}\label{corollary}
For $\theta\geq\eta$ we have
\begin{gather}\label{rho0eq}
M_\infty(\theta) =\infty \ \Longleftrightarrow \ \lim_{n\to\infty}|Q_n(\theta)/Q_n(-\theta)| = 0.
\end{gather}
\end{Corollary}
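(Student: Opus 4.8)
The plan is to apply the equivalence \eqref{aa} of Lemma~\ref{bQlim} to the companion random walk $\mathcal{X}_\theta$ and then to translate the conclusion back to the original polynomials via \eqref{newQ}. The first step is to recognize that the left-hand side of \eqref{aa}, read for $\mathcal{X}_\theta$ rather than for $\mathcal{X}$, is exactly the quantity $M_\infty(\theta)$ of \eqref{Mntheta}: it is the series $\sum_{j\geq 0}\frac{1}{p_j(\theta)\pi_j(\theta)}\sum_{k=0}^j r_k(\theta)\pi_k(\theta)$. Since Lemma~\ref{bQlim} holds for an arbitrary random walk, and since $\mathcal{X}_\theta$ is a bona fide random walk for every $\theta\geq\eta$, applying it to $\mathcal{X}_\theta$ yields
\begin{gather*}
M_\infty(\theta)=\infty \ \Longleftrightarrow \ \lim_{n\to\infty}(-1)^nQ_n(-1;\theta)=\infty .
\end{gather*}

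Next I would rewrite the right-hand limit in terms of the original polynomials. Setting $x=-1$ in \eqref{newQ} gives $Q_n(-1;\theta)=Q_n(-\theta)/Q_n(\theta)$, whence $(-1)^nQ_n(-1;\theta)=(-1)^nQ_n(-\theta)/Q_n(\theta)$. By Lemma~\ref{bQlim} applied to $\mathcal{X}_\theta$, the sequence $(-1)^nQ_n(-1;\theta)$ is increasing and starts from $Q_0(-1;\theta)=1$, so it is positive for every $n$; consequently
\begin{gather*}
(-1)^nQ_n(-1;\theta)=|Q_n(-\theta)/Q_n(\theta)|=|Q_n(\theta)/Q_n(-\theta)|^{-1}.
\end{gather*}

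Combining the two displays, $M_\infty(\theta)=\infty$ holds if and only if $|Q_n(\theta)/Q_n(-\theta)|^{-1}\to\infty$, that is, if and only if $|Q_n(\theta)/Q_n(-\theta)|\to 0$, which is precisely \eqref{rho0eq}. Here Lemma~\ref{bounds} plays the supporting role of guaranteeing that this last limit exists at all (it lies in $[0,1)$ in the aperiodic case and equals $1$ in the periodic case), so that the right-hand side of \eqref{rho0eq} is well posed and the equivalence is meaningful in both cases.

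I expect no genuine obstacle: the argument is essentially a single change of variables driven by \eqref{newQ}, wrapped around the already-established Lemma~\ref{bQlim}. The only point that requires care is the passage from ``$(-1)^nQ_n(-1;\theta)\to\infty$'' to ``$|Q_n(\theta)/Q_n(-\theta)|\to 0$'', which rests on the positivity of the increasing sequence $(-1)^nQ_n(-1;\theta)$: it is this positivity that lets the sequence be identified with $|Q_n(\theta)/Q_n(-\theta)|^{-1}$ exactly, rather than only up to sign, so that taking reciprocals converts divergence to a zero limit.
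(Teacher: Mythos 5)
Your argument is correct and coincides with the paper's own (essentially unwritten) proof: the paper obtains the corollary precisely by reading Lemma~\ref{bQlim} in terms of the random walk $\mathcal{X}_\theta$ and translating back through \eqref{newQ}, with Lemma~\ref{bounds} playing the supporting role you describe. The details you supply---identifying $M_\infty(\theta)$ of \eqref{Mntheta} with the series in \eqref{aa} for $\mathcal{X}_\theta$, and using positivity of the increasing sequence $(-1)^nQ_n(-1;\theta)\geq 1$ to identify it with $|Q_n(\theta)/Q_n(-\theta)|^{-1}$ so that divergence converts to a zero limit---are exactly the steps the paper leaves to the reader.
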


In view of \eqref{QC} it follows in particular that the random walk $\mathcal{X}$ possesses the SRLP if $M_\infty(\eta)=\infty$, which readily leads to some further sufficient conditions. Indeed, choosing $\theta=\eta$ and defining $L(\eta)$ in analogy with \eqref{L} we have
\begin{gather*}
L(\eta) = \sum_{j\geq 0} \frac{1}{p_j\pi_jQ_j(\eta)Q_{j+1}(\eta)},
\end{gather*}
so, in analogy with \eqref{recurrent}, Corollary~\ref{corollary} leads to
\begin{gather}\label{Leta}
\mathcal{X} \ \mbox{is~aperiodic~and} \ L(\eta) = \infty \ \Rightarrow \ \lim_{n\to\infty}|Q_n(\eta)/Q_n(-\eta)| = 0.
\end{gather}
By \eqref{mon1} we have $L(\eta)\geq L(1)\equiv L$ so the premise in \eqref{Leta} certainly prevails if $\mathcal{X}$ is aperiodic and recurrent. When $L(\eta)=\infty$ the random walk $\mathcal{X}$ is called $\eta$-{\em recurrent} (see \cite{DS95a} for more information). The conclusion that $\eta$-recurrence is sufficient for an aperiodic random walk to possess the SRLP is not surprising, since Pruitt \cite[Theorem~2]{P65} already established this result in the more general setting of symmetrizable Markov chains.

Another sufficient condition for the conclusion in \eqref{rho0eq} is obtained in analogy with~\eqref{rjpj}, namely
\begin{gather*}
\sum_{j\geq 0} \frac{r_jQ_j(\eta)}{p_jQ_{j+1}(\eta)} = \infty \ \Rightarrow \ \lim_{n\to\infty}|Q_n(\eta)/Q_n(-\eta)| = 0.
\end{gather*}
Since, by \eqref{mon1}, $Q_{j+1}(\eta)\leq Q_j(\eta)$ it follows in particular that
\begin{gather}\label{sumrj}
\sum_{j\geq 0} \frac{r_j}{p_j} = \infty \ \Rightarrow \ \lim_{n\to\infty}|Q_n(\eta)/Q_n(-\eta)| = 0.
\end{gather}
Interestingly, we have thus verified a passing remark by Karlin and McGregor \cite[p.~77]{KM59} to the effect that the premise in~\eqref{sumrj} is sufficient for the SRLP.

We now turn to the third lemma needed for the proof of Proposition~\ref{proposition}, which concerns the behaviour of $M_n(\theta)$ as a function of $\theta$.

\begin{Lemma}\label{M} Let $\eta\leq\theta_1\leq\theta_2$, then, for all $n$, $M_n(\theta_1)\geq M_n(\theta_2)$.
\end{Lemma}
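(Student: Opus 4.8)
The plan is to reduce the claim to a monotonicity statement about ratios of the orthogonal polynomials, and to exploit the Christoffel--Darboux consequence~\eqref{etaleq}. Since $M_n(\theta)$ is a finite double sum for each fixed $n$, there are no convergence issues, and it suffices to show that $\theta\mapsto M_n(\theta)$ is nonincreasing on $[\eta,\infty)$.

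First I would substitute the definitions~\eqref{probaux} and~\eqref{newpi} into~\eqref{Mntheta}. A short computation gives $p_j(\theta)\pi_j(\theta)=\theta^{-1}p_j\pi_j Q_j(\theta)Q_{j+1}(\theta)$ and $r_k(\theta)\pi_k(\theta)=\theta^{-1}r_k\pi_k Q_k^2(\theta)$, so the two powers of $\theta$ cancel and
\[
M_n(\theta)=\sum_{j=0}^n\frac{1}{p_j\pi_j Q_j(\theta)Q_{j+1}(\theta)}\sum_{k=0}^j r_k\pi_k Q_k^2(\theta)=\sum_{j=0}^n\sum_{k=0}^j\frac{r_k\pi_k}{p_j\pi_j}\,\frac{Q_k^2(\theta)}{Q_j(\theta)Q_{j+1}(\theta)}.
\]
As $r_k\pi_k\geq 0$ and $p_j\pi_j>0$, it then suffices to prove that for each pair $k\leq j$ the factor $Q_k^2(\theta)/\big(Q_j(\theta)Q_{j+1}(\theta)\big)$ is nonincreasing in $\theta$ on $[\eta,\infty)$: the whole of $M_n$ is a nonnegative combination of such factors.

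The key step, and the only real substance, is the monotonicity of ratios of consecutive polynomials. For $\theta\geq\eta$ all the $Q_m(\theta)$ are positive by~\eqref{pos}, so dividing the inequality in~\eqref{etaleq} by $Q_{m+1}(x)Q_{m+1}(y)$ shows that $\theta\mapsto Q_m(\theta)/Q_{m+1}(\theta)$ is strictly decreasing on $[\eta,\infty)$ for every $m$. Writing, for $k<n$, the telescoping product $Q_k(\theta)/Q_n(\theta)=\prod_{m=k}^{n-1}Q_m(\theta)/Q_{m+1}(\theta)$ as a product of positive decreasing factors, I conclude that $\theta\mapsto Q_k(\theta)/Q_n(\theta)$ is positive and nonincreasing for all $k\leq n$ (it is constantly $1$ when $k=n$). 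Factoring $Q_k^2(\theta)/\big(Q_j(\theta)Q_{j+1}(\theta)\big)=\big(Q_k(\theta)/Q_j(\theta)\big)\big(Q_k(\theta)/Q_{j+1}(\theta)\big)$ then exhibits it as a product of two positive nonincreasing functions, hence itself nonincreasing, since for nonnegative nonincreasing $f,g$ one has $f(\theta_2)g(\theta_2)\leq f(\theta_1)g(\theta_1)$. Summing over $k\leq j\leq n$ with the nonnegative weights $r_k\pi_k/(p_j\pi_j)$ gives $M_n(\theta_1)\geq M_n(\theta_2)$ whenever $\eta\leq\theta_1\leq\theta_2$, as desired. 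I expect~\eqref{etaleq} to do exactly the right work here; the only point requiring care is the elementary observation that a product of nonnegative nonincreasing functions is again nonincreasing, which guarantees that no cancellation among terms can disturb the monotonicity.
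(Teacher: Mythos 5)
Your proof is correct, but it takes a genuinely different route from the paper's. You substitute \eqref{probaux} and \eqref{newpi} directly into \eqref{Mntheta}, observe the exact cancellation of the powers of $\theta$ (which is right: $p_j(\theta)\pi_j(\theta)=\theta^{-1}p_j\pi_j Q_j(\theta)Q_{j+1}(\theta)$ and $r_k(\theta)\pi_k(\theta)=\theta^{-1}r_k\pi_k Q_k^2(\theta)$, and the inner $\theta^{-1}$ cancels the outer $\theta$ term by term), and then reduce the claim to the termwise monotonicity of $Q_k^2(\theta)/\big(Q_j(\theta)Q_{j+1}(\theta)\big)$ for $k\leq j$, which you obtain from \eqref{etaleq}: each consecutive ratio $Q_m(\theta)/Q_{m+1}(\theta)$ is positive and strictly decreasing on $[\eta,\infty)$, and telescoping plus the fact that a product of positive nonincreasing functions is nonincreasing finishes the argument. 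Every step checks out, including the harmless degenerate cases $k=j$ and $r_k=0$. The paper instead proceeds by a perturbation argument: it first shows, for an arbitrary random walk, that changing the transition probabilities at a single state $\ell$ so that $p'_\ell\leq p_\ell$, $q'_\ell\geq q_\ell$, $r'_\ell\geq r_\ell$ (the conditions \eqref{change}) can only increase $M_n$, via a ``somewhat tedious but straightforward calculation'' giving an explicit formula for the increment; it then uses \eqref{etaleq} to verify that passing from the walk $\mathcal{X}_{\theta_2}$ to $\mathcal{X}_{\theta_1}$ amounts to exactly such a change at every state, and chains the single-state modifications for $\ell=0,1,\dots,n$. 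Both arguments have \eqref{etaleq} as their essential input. Yours buys brevity and transparency: an explicit closed form for $M_n(\theta)$ whose $\theta$-dependence sits entirely in ratios of polynomials, with no perturbation machinery and no unstated computation. The paper's buys a more general comparison principle -- monotonicity of $M_n$ under arbitrary single-state parameter changes satisfying \eqref{change} -- which applies to pairs of random walks not related by the $\theta$-transformation at all, though that extra generality is not needed for the lemma itself.
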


\begin{proof}First consider an arbitrary random walk with parameters $p_j$, $q_j$, and $r_j$, $j\in\mathcal{N}$. Let $n$ be fixed and write
\begin{gather*}
M_n = \sum_{j=0}^n\frac{1}{p_j\pi_j}\sum_{k=0}^j r_k\pi_k, \qquad n=0,1,\dots.
\end{gather*}
Suppose that in the single state $\ell$, $0\leq\ell\leq n$, the transition probabilities $p_\ell$, $q_\ell$ and $r_\ell$ are changed into the one-step (random walk) transition probabilities $p'_\ell$, $q'_\ell$ and $r'_\ell$ satisfying, besides the usual requirements,
\begin{gather}\label{change}
p'_\ell\leq p_\ell,\qquad q'_\ell\geq q_\ell \qquad \mbox{and} \qquad r'_\ell\geq r_\ell.
\end{gather}
Let $M_n'$ denote the value of $M_n$ after the change. A somewhat tedious but straightforward calculation then yields that
\begin{gather*}
M'_n = M_n + \left\{(c_1-1)\sum_{k=0}^{\ell-1}r_k\pi_k + (c_1c_2-1)r_\ell\pi_\ell\right\}\sum_{j=\ell}^n\frac{1}{p_j\pi_j},
\end{gather*}
where $c_1$ and $c_2$ are constants satisfying
\begin{gather*}
q_\ell c_1= \frac{p_\ell q'_\ell}{p'_\ell} \qquad \mbox{and} \qquad r_\ell c_2= \frac{p_\ell r'_\ell}{p'_\ell}.
\end{gather*}
The values of $c_1$ when $\ell=0$ and $c_2$ when $r_\ell=0$ are clearly irrelevant, but let us choose $c_1=1$ and $c_2=1$ in these cases. Then, under the given circumstances, we always have $c_1\geq 1$ and $c_2\geq 1$, and hence $M'_n\geq M_n$.

Back to the setting of the lemma we note that if $\eta\leq\theta_1 < \theta_2$, then $r_j(\theta_1) \geq r_j(\theta_2)$, and, by~\eqref{etaleq},
\begin{gather*}
q_j(\theta_1) = \frac{Q_{j-1}(\theta_1)}{Q_j(\theta_1)}\frac{q_j}{\theta_1} > \frac{Q_{j-1}(\theta_2)}{Q_j(\theta_2)}\frac{q_j}{\theta_1} > \frac{Q_{j-1}(\theta_2)}{Q_j(\theta_2)}\frac{q_j}{\theta_2}= q_j(\theta_2),
\qquad j>0.
\end{gather*}
Since $p_j(\theta)+q_j(\theta)+r_j(\theta)=1$, it follows that
\begin{gather}\label{p12}
p_j(\theta_1)<p_j(\theta_2).
\end{gather}
Now let $p_j= p_j(\theta_2)$, $q_j = q_j(\theta_2)$, $r_j= r_j(\theta_2)$ for all $j\in \mathcal{N}$ and suppose we perform the change operation with $p'_\ell=p_\ell(\theta_1)$, $q'_\ell=q_\ell(\theta_1)$ and $r'_\ell=r_\ell(\theta_1)$ (so that \eqref{change} is satisfied) successively for $\ell = 0,1,\dots,n$. Letting $M^{(\ell)}$ be the value into which $M^{(0)}:=M_n(\theta_2)$ has been transformed after the $\ell$th change operation, we then obviously have
\begin{gather*}
M_n(\theta_1) = M^{(n)} \geq M^{(n-1)}\geq \cdots \geq M^{(1)}\geq M^{(0)}=M_n(\theta_2),
\end{gather*}
which was to be proven.
\end{proof}

We have now gathered sufficient information to draw our conclusions in the final section, after noting as an aside that \eqref{p12} leads to a strengthening of~\eqref{etaleq}, namely
\begin{gather*}
\eta \leq x < y \ \Rightarrow \ xQ_n(x)Q_{n+1}(y) > yQ_n(y)Q_{n+1}(x).
\end{gather*}

\section{Proof of Theorem~\ref{theorem} and concluding remarks}\label{conc}

Choosing $\theta_1 = \eta$ and $\theta_2=1$ in Lemma~\ref{M} we conclude that $M_n(\eta)\geq M_n(1)$ for all $n$. Hence $M_\infty(\eta)\geq M_\infty(1)$, so that
\begin{gather*}
M_\infty(1) = \sum_{j\geq 0} \frac{1}{p_j\pi_j} \sum_{k=0}^j r_k\pi_k =\infty \ \Rightarrow \ M_\infty(\eta) = \infty,
\end{gather*}
which, by Corollary~\ref{corollary}, leads to Proposition~\ref{proposition}.

It seems unlikely that there are values of $\theta_1$ and $\theta_2$ such that $\eta<\theta_1<\theta_2$ and $M_\infty(\theta_1)=\infty$, but $M_\infty(\theta_2)<\infty$, since there do not seem to be values of $x> \eta$ that are ``special'' in any sense. So we conjecture that $M_\infty(\theta_1)$ and $M_\infty(\theta_2)$ converge or diverge together. It is tempting to go one step further by extending this conjecture to $\eta\leq\theta_1<\theta_2$. Maintaining the conjecture in~\cite{DS95b} that also the reverse implication in \eqref{QC} is valid, we would then arrive at the conjecture that~\eqref{condition} is not only sufficient but also necessary for~$\mathcal{X}$ to possess the SRLP. However, this not correct, since it is possible to have $M_\infty(\eta)=\infty$ and $M(1)<\infty$ simultaneously, as the next example shows.

\begin{Example}Consider a random walk $\mathcal{\tilde{X}}$ determined by one-step transition probabilities~$\tilde{p}_j$,~$\tilde{q}_j$ and $\tilde{r}_j$ with $\tilde{r}_0>0$ and $\tilde{r}_j=0$ for $j>0$. Quantities associated with $\mathcal{\tilde{X}}$ will be indicated by a~tilde. We will assume that $\mathcal{\tilde{X}}$ is {\em recurrent}, so that $\tilde{\eta}=1$. Now let $\alpha>1$ and define
\begin{gather}\label{probaux2}
p_j := \frac{\tQ_{j+1}(\alpha)}{\tQ_j(\alpha)}\frac{\tilde{p}_j}{\alpha},\qquad
r_j := \frac{\tilde{r}_j}{\alpha},\qquad q_{j+1} := \frac{\tQ_j(\alpha)}{\tQ_{j+1}(\alpha)}\frac{\tilde{q}_{j+1}}{\alpha},
\qquad j \in \mathcal{N}.
\end{gather}
These quantities, like those in~\eqref{probaux}, can be interpreted as the one-step transition probabilities of a new random walk $\mathcal{X}$, say. In what follows we associate quantities without tilde with $\mathcal{X}$. In analogy with~\eqref{newQ} and~\eqref{newpi} we thus have $Q_n(x)=\tQ_n(\alpha x)/\tQ(\alpha)$ and $\pi_n=\tilde{\pi}_n\tQ_n^2(\alpha)$. Also, $\eta = \tilde{\eta}\alpha^{-1} = \alpha^{-1}<1$, so that $\mathcal{X}$ must be transient. Next, letting $M_n(\theta)$ be defined as in \eqref{Mntheta} and \eqref{probaux} where $p_j$, $q_j$ and $r_j$ are given by \eqref{probaux2}, we have
\begin{gather*}
M_\infty(1) = r_0 \sum_{j\geq 0} \frac{1}{p_j\pi_j} < \infty,
\end{gather*}
since $\mathcal{X}$ is transient. But on the other hand
\begin{gather*}
M_\infty(\eta) = M_\infty\big(\alpha^{-1}\big) = r_0\sum_{j\geq 0} \frac{1}{p_j\pi_jQ_j\big(\alpha^{-1}\big)Q_{j+1}\big(\alpha^{-1}\big)}=\tilde{r}_0\sum_{j\geq 0} \frac{1}{\tilde{p}_j\tilde{\pi}_j} = \infty,
\end{gather*}
since $\mathcal{\tilde{X}}$ is recurrent.
\end{Example}

We have already encountered several known sufficient conditions for the random walk $\mathcal{X}$ to possess the SRLP. In particular, $\eta$-recurrence~-- and thus recurrence, which is simply $1$-re\-cur\-rence~-- was shown to be sufficient in~\eqref{Leta}. Also, in view of~\eqref{ineq} we regain directly from Theorem~1 Karlin and McGregor's claim on \cite[p.~77]{KM59}
\begin{gather*}
\sum_{j\geq 0} \frac{r_j}{p_j} = \infty \ \Rightarrow \ \mathcal{X}
~\mbox{possesses~the~SRLP},
\end{gather*}
referred to after \eqref{sumrj}. Several authors (see \cite[p.~77]{KM59}, \cite[Corollary~3.2]{DS95b}) have shown that for the SRLP to prevail it is sufficient that $r_j>\delta> 0$ for $j$ sufficiently large, but this condition is evidently weaker than the previous one.

\pdfbookmark[1]{References}{ref}
\LastPageEnding

\end{document}